

\documentclass[10pt,twocolumn,twoside]{ieeetran}     

\IEEEoverridecommandlockouts                              


\usepackage{graphics} 
\usepackage{epsfig} 
\usepackage{mathptmx} 
\usepackage{times} 
\usepackage{amsmath} 
\usepackage{amssymb}  
\usepackage{graphicx}
\usepackage{relsize}
\DeclareMathAlphabet{\mathcal}{OMS}{cmsy}{m}{n}
\SetMathAlphabet{\mathcal}{bold}{OMS}{cmsy}{b}{n}

\title{\LARGE \bf
Inverse optimal control for angle stabilization in \\ converter-based generation}
\author{Taouba Jouini, Anders Rantzer and Emma Tegling 
\thanks{*This work has received funding from the European Research Council (ERC) under the European Union's Horizon 2020 research  and innovation program (grant agreement No: 834142) and the Swedish Research Council (grant 2019-0069).}%
\thanks{\tb This is an extended version of a manuscript accepted for publication at the American Control Conference (ACC) in Atlanta, June 8-10, 2022. In this version, Section IV compares the performance of the angular droop controller to standard frequency droop control.}
\thanks{The authors are with the Department of Automatic Control, LTH, Lund University,
        Naturvetarvägen 18, 223 62, Lund, Sweden. 
        E-mails:
        \tt\small \{taouba.jouini, anders.rantzer, emma.tegling\}@control.lth.se}}%

\graphicspath{{./fig/}}
\usepackage[usenames,dvipsnames]{color}
\usepackage{epstopdf}

\usepackage{amsmath,amsthm,amssymb,mathrsfs,dsfont}
\usepackage{amsfonts}
\usepackage{siunitx}
\usepackage[colorinlistoftodos,prependcaption,textsize=tiny]{todonotes}

\usepackage{tikz}
\usetikzlibrary{matrix}

\usepackage{cite}

\newcommand\oprocendsymbol{\hbox{$\blacksquare$}}

\newcommand{\dd}[0]{\mathrm d}
\newcommand\oprocend{\relax\ifmmode\else\unskip\hfill\fi\oprocendsymbol}

\newcommand{\real}[0]{\mathbb R}

\providecommand{\norm}[1]{\lVert#1\rVert}

\newcommand{\hn}{$\mathcal{H}_2$ }

\newtheorem{theorem}{Theorem}[section]

\newtheorem{definition}[]{Definition} 

\newtheorem{proposition}[theorem]{Proposition}
\newtheorem{result}[theorem]{Result}

\newtheorem{remark}[]{Remark}
\newtheorem{assumption}[]{Assumption}

\usepackage{verbatim} 






\newcommand{\tb}[0]{\color{blue}}

\usepackage[normalem]{ulem}
\newcommand\rout{\bgroup\markoverwith{\textcolor{red}{/}}\ULon} 

\usepackage[prependcaption,colorinlistoftodos]{todonotes}





\begin{document}

\maketitle
\thispagestyle{empty}
\pagestyle{empty}

\begin{abstract}
       In inverse optimal control, the optimality of a given feedback stabilizing controller is a byproduct of the choice of a meaningful, a posteriori defined, cost functional. This allows for a simple tuning comparable to linear quadratic control, also for nonlinear controllers. Our work illustrates the usefulness of this approach in the control of converter-based power systems and networked systems in general, and thereby in finding controllers with topological structure and known optimality properties. In particular, \color{black} we {design} an inverse optimal feedback controller that stabilizes the phase angles of voltage-source controlled {DC/AC} converters at an induced steady state with {\em zero} frequency error. The distributed angular droop controller yields active power to angle droop behavior at steady state.  Moreover, we suggest a practical implementation of the controller and corroborate our results through simulations on a three-converter system {and a numerical comparison with standard frequency droop control}.
\end{abstract}


\section{INTRODUCTION}

{ A} diagnosis of the event of September 28, 2016 in Australia shows anomalous power systems dynamics caused by a series of voltage dips~\cite{operator2017black}. This was originated by the growing angle difference between { the voltage phase angles of two areas in South Australia prior to the separation of South Australia from the remainder of the electrical grid}. Following separation, sudden phase angle changes accompanied by a rapid change in the load have resulted in inaccuracies in short-term frequency measurements~\cite{paolone2020fundamentals}. 
{ A} lesson that can be drawn from the event in Australia is the importance of phase angles in monitoring the stability of converter-based generation and, in particular, in providing useful information that can be exploited for a better design of control schemes for converters~\cite{paolone2020fundamentals}.
Recently, different DC/AC converter control strategies have been proposed to stabilize the output voltage angles at a desired steady state, { f}or example, based on gradient systems and Kuramoto-like oscillator dynamics~\cite{arghir2019electronic, tayyebi2020hybrid}. 
{ Similarly, our work aims to control the angles of DC/AC converters.} 

{ Optimization remains} an important theoretical tool for stability and control in power systems~\cite{molzahn2017survey} and is the backbone of a plethora of strategies for an improved operation of the electrical grid. 
In~\cite{7852234}, dynamic online feedback optimization is used to synthesize controllers, while accounting for input and output constraints and allowing for non-smooth feasible sets based on projected gradient descent algorithms.
Furthermore, the online feedback optimization discussed in~\cite{colombino2019online} enables the study of time-varying convex optimization problems, while allowing for disturbance rejection and exact tracking, and is showcased for power transmission systems to compress the time scales between secondary and tertiary control. Feedback optimization based on dynamic programming is deployed in~\cite{guo2018stochastic} for power scheduling of converters and the associated operational cost in a data-driven stochastic framework. 

In optimal control, it is well-known that every meaningful value function is a Lyapunov function. This constitutes an important link between stability and optimality and allows for the systematic analysis of optimal feedback controllers. 
In {\em inverse} optimal control, the converse link is established. Namely, it is shown that every Lyapunov function is a meaningful value function. This allows for a systematic { design} of feedback controllers associated with control Lyapunov functions, that are optimal with respect to an a {\em posteriori} specified cost functional, satisfying the Hamilton-Jacobi-Bellman (HJB) equation. This was first spotted by R.E. Kalman~\cite{kalman1964linear} for linear systems with quadratic cost and later extended to nonlinear systems by Moylan and Anderson in~\cite{moylan1973nonlinear}, Casti et al.~\cite{casti1980general} for a class of cost functionals that are, e.g., strictly convex in the input for a fixed state, subject to general nonlinear systems. Afterwards, Freeman and Kokotovic incremented the system dynamics { with disturbances and incorporated the constraints} in~\cite{freeman1996inverse} { to study} the inverse robust stabilization problem leading to the analysis of the Hamilton-Jacobi-Isaacs (HJI) equation. 
Our previous work in~\cite{9429728} exploits the same theory { to design a distributed controller} in coupled second-order oscillators.

In this work, we consider a network of voltage-source controlled converters, each of which is equipped with the capability of actuating the voltage phase angle, using synchrophasors. Synchrophasors are time-synchronized electrical measurements that represent both the magnitude and phase angle of the electrical sinusoids, measured by fast time-stamped devices, or { phasor measurement units (PMUs)}, and constitute the basis of real-time monitoring and control actions in the electric grid~\cite{ usman2019applications}. 
In particular, we formulate an inverse optimal control problem, where a {\em distributed} solution to the HJB equation can be found without expensive computations. From a theoretical point of view, the proposed controller demonstrates the usefulness of inverse optimal control theory in networked settings via synthesis of {\em the angular droop control}{, a feat that is otherwise challenging}.  

{ The angular droop controller, designed for the multi-converter system, coincides with that proposed in~\cite{6987381,7419922}. In these works, only a linear stability analysis is conducted and optimality is not established. Here, we prove local asymptotic stability of the induced steady state angle with respect to nonlinear system dynamics.  
The angular droop controller turns out to be the inverse optimal locally stabilizing control law for the multi-converter system with respect to a meaningful cost functional. { As such,} our control design bridges a gap between control theorists and power system experts{ , by demonstrating optimality for the intuitively appealing controller of power converters.}
{ The optimal controller} has desired gradient descent form and possesses} grid-forming capabilities contributing to angle stabilization and thus achieves both primary and secondary frequency control, i.e., zero frequency error. 
Finally, we validate our results on a high-order model of three DC/AC converter system in closed-loop with the angular droop control, give nuts and bolts on how a practical implementation can be achieved { and provide a numerical comparison to standard frequency droop control{~\cite{SIMPSONPORCO20132603}} demonstrating, in particular, improved scalability properties to large networks}.

{\em Notation:} 
For a matrix $P\in\real^{n\times n},\; P=P^\top>0$ and a vector $v\in\real^n$, let $\norm{v}_{P}=\sqrt{v^\top P \,v}$. Let $\mathrm{diag} (v)$ be the diagonal matrix with elements $v_i, \,  i=1,\dots ,n$, $\norm{v}_{\infty}=\sup_{i=1\dots n} \vert v_i\vert$ be the maximum norm of $v$, and $\underline{\sin}(v)$ and $\underline{\cos}(v)$ be the vector-valued sine and cosine functions. Given a { twice} continuously differentiable function $V(x)$, let $\nabla_x V=\frac{\partial V}{\partial x}$ be the the gradient of $V$ with respect to $x$ and $\nabla^2_x V=\frac{\partial^2 V}{\partial^2 x}$ its Hessian matrix. 
{ For $p\in\mathds{N}$, l}et $I_p$ be the $p\times p$ identity matrix and $\mathds{1}_p$ be the $p\times 1$ vector of all ones. Given a dynamical system, $\dot x(t)= f(x(t)),\, x(0)=x_0$, we consider the system to be time-invariant throughout and mostly drop the time-dependence of the state variables in the notation. 

Furthermore, consider a network described by a connected graph~$\mathcal{G}=(\mathcal{V}, \mathcal{E},  \Xi)$, consisting of $\vert\mathcal{V}\vert=n$ nodes representing  DC/AC converter buses and $\vert\mathcal{E}\vert=m$ edges { modeling} purely inductive transmission lines (i.e., with zero conductances) with susceptance $b_{kj}>0,\; (k,j)\in\mathcal{E}$ collected in the diagonal matrix~$\Xi=\mathrm{diag}(b_{kj}),\; (k,j)\in\mathcal{E}$. The topology of the graph~$\mathcal{G}$ is described by the incidence matrix $\mathcal{B}\in\real^{n\times m}$. Let $\mathcal{N}_k$ be the neighbor set of converter $k$. We denote by $\mathcal{L} = \mathcal{B}\, \Xi\, \mathcal{B}^\top$ the bus admittance matrix of $\mathcal{G}$, which is a weighted Laplacian with eigenvalues $0=\lambda_1 < \lambda_2\leq \dots \leq \lambda_n$. 

\section{Problem formulation}

\label{sec: prob-formulation}
In this section, we start by presenting the multi-converter model following~\cite{kundur1994power, dorfler2016breaking} and then formulate the corresponding optimal control problem. This underlies the analysis of the angular feedback control that is at the core of our main result. 
\subsection{Modeling and setup}

Consider a network of DC/AC power converters (e.g., islanded microgrid), each represented by a voltage phasor and interconnected via inductive transmission lines. We make the common assumption that  the system is in quasi-stationary state, i.e., around a nominal steady state frequency~$\omega^*$, see~\cite{kundur1994power, dorfler2016breaking}, meaning that all phasors are modeled with constant magnitude (1 per unit), and assume that the angle dynamics are controllable. For this, the converter dynamics are reduced to the following integrator dynamics,
\begin{align}
\label{eq: conv-dyn}
\dot\theta= u\,(\theta)+\omega^*\mathds{1}_n,\quad \theta(0)=\theta_0. 
\end{align}
Here, $u(\theta)=[u_1(\theta),\dots,u_n(\theta)]^\top\in\mathbb{R}^n$ is the control input, $\theta=[\theta_1,\dots,\theta_n]^\top\in\mathbb{R}^n$ is the vector of phase angles of the DC/AC converters and $\theta_0\in\mathbb{R}^n$ is the initial angle vector. 
While the modeling choice in this section ignores the internal dynamics of the converter, it enables the design of the optimal controller in a concise, closed-form due to its simplicity and mathematical tractability.
Later, Section~\ref{sec: sims} considers { a network of} detailed internal converter dynamics, { with lossy transmission lines}, descendent from first-order principles as in~\cite{jouini2020steady}, and discusses a practical implementation of the control scheme.

For the control design in~\eqref{eq: conv-dyn}, we consider a scenario where synchrophasor measurements with respect to a global frame of reference are available to each converter. This is a reasonable scenario for a future power grid, as PMU installation is becoming increasingly widespread~\cite{usman2019applications}.
We define the set of nominal phase angles, rotating at a synchronous frequency $\omega^*$, as $\theta^*(t)=\omega^*\mathds{1}_n t+\theta^*_0\in\real^n$, where $\theta^*_0=[\theta^*_{01},\dots,\theta^*_{0n}]^\top\in\real^n$ is the nominal initial angle vector. Let $\theta^*_{kj}=\theta^*_k-\theta^*_j$ define the nominal phase angle difference between neighboring converters~$(k,j)\in\mathcal{E}$. 
{ Assuming inductive (i.e. lossless) transmission lines,} the active power deviation from the nominal is given by, $$ P_{e,k}(\theta)-P^*_{e,k}=\sum\limits_{j\in\mathcal{N}_k} b_{kj}  \left(\sin(\theta_{kj})-\sin(\theta^*_{kj})\right),$$ where $P_{e,k}(\theta)$ is the electrical power injected into the network at the $k-$th converter and $P_{e,k}^*$ is the nominal power drawn from a DC source behind the $k-$th converter. 

\begin{remark}
\label{rem: freq-droop}
Recall that the control law, 
\begin{align}
\label{eq: freq-drp}
u_k(\theta)=-1/d_k\,\left(P_{e,k}(\theta)-P^*_{e,k}\right), \quad d_k>0,\,~ k=1,\dots, n,   \end{align}
results in the first-order frequency-droop control, that represents a prevalent approach for primary control in islanded microgrids. This, however, results in stationary frequency errors, which requires~\eqref{eq: freq-drp} to be augmented with a secondary control architecture, namely the automated generation control~\cite{dorfler2016breaking}.
\end{remark}

Following Remark~\ref{rem: freq-droop}, our goal in this work is to use measurements obtained from PMUs to synthesize a feedback controller with optimality guarantees. This will be shown to coincide with {\em the angular droop} control proposed in~\cite{6987381, 7419922}. This controller stabilizes the phase angle error (with respect to a nominal steady state angle) and is characterized by {\em zero} frequency deviation at stationarity.

\subsection{Optimal control problem formulation}
Consider the following optimization problem,
\begin{align}
\label{eq: main-prob}
\min_{u\in\real^n}& \int_0^\infty \sum_{k=1}^n \bigg(\alpha_k u_k^2(\theta)+ \\ &\frac{1}{4\alpha_k}\Big(\gamma_{k}(\theta_k-\theta_k^*)+  P_{e,k}(\theta)-P^*_{e,k}\Big)^2\bigg)\, \mathrm{d}t, \nonumber \\
&\text{s.t. } \dot{\theta} =u(\theta)+\omega^*\mathds{1}_n,\quad \theta(0)=\theta_0.\nonumber \end{align}
In~\eqref{eq: main-prob}, the first term in the running cost (the integrand) penalizes the control effort through the positive gains $\alpha_k>0,\, k=1,\dots, n$, by minimizing the scaled total power generation. { The second term is designed to accommodate a desired steady state behavior: power to angle droop, or $P-\theta$ droop, where $\gamma_k>0,\,~ k=1,\dots, n$, is a droop gain.} This droop behavior leads to zero stationary frequency error { and} can be seen as follows: 
under the optimal control $u^*(\theta)$ that solves~\eqref{eq: main-prob}, the running cost goes asymptotically to zero and it holds that, $$\lim\limits_{t\to \infty}\left(\gamma_{k}(\theta_k(t)-\theta_k^*(t))+  P_{e,k}(\theta)-P^*_{e,k}\right)=0.$$

More precisely, let $\theta^s_k:=\lim_{t\to\infty} \theta_k(t)$ be an induced steady state angle at the $k-$th converter. Then,
\begin{align}
\label{eq: powe-bal}
\gamma_{k}(\theta^s_k-\theta_k^*)&= P^*_{e,k}-P_{e,k}(\theta^s),\quad k=1,\dots, n.    \end{align}
Equation~\eqref{eq: powe-bal} describes the steady state as a power balance between the active power and angle deviation from the nominal value{ , where} $\theta^s=\{\theta^s_k\}_{k=1}^n$ given by \eqref{eq: powe-bal} is the induced steady state angle vector.
{ By taking the time derivative of~\eqref{eq: powe-bal}, we arrive at $\dot\theta^s_k=\omega^*$. I}t is evident that the steady state frequency error { is} zero. Intuitively,~\eqref{eq: powe-bal} is able to guarantee primary and secondary frequency control at once, i.e., resulting in a power system steady state with zero frequency error. In what follows, we synthesize an angle feedback control law $u^*(\theta)$ that uniquely solves~\eqref{eq: main-prob}.

\section{Inverse optimal control design}
\label{sec: optimal-control-design}
An innovative approach to optimal control synthesis was introduced in{~\cite{kalman1964linear, freeman1996inverse, haddad2011nonlinear, sepulchre2012constructive, 9429728}} and relies on the following idea: a feedback stabilizing control law associated with a control Lyapunov function for a dynamical system
is {\em first} determined and {\em then} a suitably chosen cost functional is found that satisfies the HJB equation. This constitutes the so-called {\em inverse} optimal control problem, where the running cost and the control parameters, representing a tuning knob, are determined {\em a posteriori}. This circumvents the need for an extensive search for a {\em good} cost functional { and} gives a value function from a suggested control Lyapunov function {\em for free} (without { analytically and} computationally expensive calculations). It also allows an easy control tuning with stability guarantees and is applicable to a wide range of optimal control problems. 

For our power network application, inverse optimal control { allows us to design} a {\em distributed} controller with feasible implementation.
In this section, we show that the optimization problem~\eqref{eq: main-prob} obeys the systematic { optimal} control synthesis presented in~\cite{freeman1996inverse, haddad2011nonlinear, sepulchre2012constructive, 9429728}. For convenience, we cite the following Theorem from our previous work~\cite{9429728}. The same results are also found in~\cite[Theorem 8.1]{freeman1996inverse},~\cite[Section 3.5]{sepulchre2012constructive}.

\begin{theorem}
\label{thm: H2-ctrl}
Consider the optimal control problem,
\begin{subequations}
\label{eq: opt-prob}
\begin{align}
\min_{u\in\real^n} & \int_0^\infty \norm{u(s)}^2_{\overline R} + \, q(x(s)) \; \dd s,\\
\mathrm{s.t.\quad } \dot x&= H^\top(x)\,u, \quad x(0)=x_0,
\label{eq: sys-dyn}
\end{align}
\end{subequations}
where $x,\;x_0\in\real^n,\; u\in\real^{n},\;  \overline R=\overline R^\top>0$, $q(x)$ is a function satisfying~$q(x)>0,\, q(0)=0$ and $H(x)\in\real^{m\times n}$ is the input matrix. Furthermore, let $V:\mathds{R}^n\mapsto \mathds{R}_{{ >} 0},$ be a continuously differentiable function associated with a feedback stabilizing control law, 
\begin{align}
\label{eq: H2-opt-cost-1}
u^*(x)&= -\frac{1}{2}\, {\overline R}^{-1} H(x)\, \nabla_x V,
\end{align}
where, $\nabla_x V^\top H^\top(x)\, u^*(x)<  -\norm{u^{*}(x)}^2_{\overline R}.$   
Define
\begin{align}
\label{eq: q-x}
q(x)=-\nabla_x V^\top H^\top(x) u^*(x) -\norm{u^{*}(x)}^2_{\overline R}.
\end{align} Then, the following statements hold: 
\begin{enumerate}
\item The unique optimal control is given by $u^*(x)$ in~\eqref{eq: H2-opt-cost-1}.
\item The optimal control problem~\eqref{eq: opt-prob} has the optimal value $V(x_0):=\inf\limits_{u\in\real^n} \int_0^\infty \norm{u(s)}^2_{\overline R} + \, q(x(s)) \; \dd s$ with $q(x)$ in~\eqref{eq: q-x}.
\end{enumerate}
\end{theorem}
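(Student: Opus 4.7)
The plan is to establish both claims by a classical verification argument based on the Hamilton-Jacobi-Bellman (HJB) equation, combined with a completion-of-squares identity that exposes the optimality gap of any admissible control.

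First, I would verify that $V$ solves the HJB equation associated with \eqref{eq: opt-prob}. The HJB Hamiltonian reads
\begin{align*}
\mathcal{H}(x,u)= \|u\|^2_{\overline R}+q(x)+\nabla_x V^\top H^\top(x)\,u.
\end{align*}
Setting $\partial_u \mathcal{H}=0$ yields the pointwise minimizer $-\tfrac{1}{2}\,\overline R^{-1} H(x)\nabla_x V$, which coincides with the stabilizing feedback $u^*(x)$ in \eqref{eq: H2-opt-cost-1}. Substituting $u^*$ back into $\mathcal{H}$ and invoking the definition \eqref{eq: q-x} of $q(x)$ yields $\mathcal{H}(x,u^*(x))\equiv 0$, so the HJB equation is satisfied identically. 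This also confirms, via the strict inequality in the hypothesis, that $q(x)>0$ on $\real^n\setminus\{0\}$ with $q(0)=0$, so the cost functional is well posed.

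Next, I would derive the central completion-of-squares identity. Using \eqref{eq: H2-opt-cost-1} in the form $H(x)\nabla_x V=-2\,\overline R\,u^*(x)$, a direct manipulation gives, for every admissible $u$,
\begin{align*}
\|u\|^2_{\overline R}+q(x)+\nabla_x V^\top H^\top(x)\,u \;=\; \|u-u^*(x)\|^2_{\overline R}.
\end{align*}
Along any trajectory of \eqref{eq: sys-dyn} the chain rule gives $\dot V=\nabla_x V^\top H^\top(x)\,u$, so integrating over $[0,T]$ produces the key identity
\begin{align*}
\int_0^T \!\Big(\|u(s)\|^2_{\overline R}+q(x(s))\Big)\,\mathrm{d}s \;=\; V(x_0)-V(x(T))+\int_0^T \!\|u(s)-u^*(x(s))\|^2_{\overline R}\,\mathrm{d}s.
\end{align*}
Passing to the limit $T\to\infty$ over the admissible class of controls (those rendering the infinite-horizon cost finite), the positivity of $q$ away from the origin forces $V(x(T))\to 0$, yielding
\begin{align*}
\int_0^\infty \!\Big(\|u\|^2_{\overline R}+q(x)\Big)\,\mathrm{d}s \;=\; V(x_0)+\int_0^\infty \!\|u-u^*\|^2_{\overline R}\,\mathrm{d}s \;\geq\; V(x_0),
\end{align*}
with equality if and only if $u\equiv u^*$. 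Both statements of the theorem follow at once.

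The main technical obstacle will be this last passage to the limit, i.e., precisely characterizing the admissible class of controls so that finite cost implies state convergence to the origin. The strict inequality $\nabla_x V^\top H^\top(x)\,u^*(x)<-\|u^*(x)\|^2_{\overline R}$ guarantees strict positivity of $q$ outside the origin, which combined with the control Lyapunov role of $V$ (positive-definite and, under the usual properness assumption, radially unbounded) is what legitimizes $V(x(T))\to 0$ on admissible trajectories. Once this is in place, the completion-of-squares identity does all the remaining work, so the algebraic core of the proof is entirely routine; the only delicate point is the admissibility framework.
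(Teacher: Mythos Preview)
Your proposal is correct and follows the standard verification/completion-of-squares route that underlies inverse optimal control results of this type. Note, however, that the paper does not actually prove this theorem: it is cited verbatim from the authors' earlier work~\cite{9429728} (and equivalently from~\cite[Theorem 8.1]{freeman1996inverse} and~\cite[Section 3.5]{sepulchre2012constructive}), so there is no in-paper proof to compare against. Your argument is precisely the one used in those references, including the HJB check, the identity $\|u\|^2_{\overline R}+q(x)+\nabla_x V^\top H^\top(x)\,u=\|u-u^*(x)\|^2_{\overline R}$, and the integration that yields $V(x_0)$ as the optimal value with equality iff $u\equiv u^*$. Your honest flag about the passage to the limit (finite cost $\Rightarrow$ $V(x(T))\to 0$) is exactly the point where those references impose the admissibility/stabilizing hypotheses, so you have identified the only nonroutine step correctly.
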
 
We make the following assumption.
\begin{assumption}
\label{ass: bounded-sol}
The { induced} steady state angle vector $\theta^{ s}=\{\theta^{ s}_k\}_{k=1}^n$ satisfies, 
$\mathcal{B}^\top \theta^{ s}\in\left(-\frac{\pi}{2}, \frac{\pi}{2}\right)^m$, where $\mathcal{B}\in\real^{n\times m}$ is the { incidence} matrix of the underlying graph $\mathcal{G}$.
\end{assumption}
Assumption~\ref{ass: bounded-sol} states that the difference { in steady state} voltage angles between neighboring nodes is not larger than $\pi/2$. This is commonly referred to as a {\em security constraint}~\cite{monshizadeh2017stability}. 
For ease of presentation, we introduce,
\begin{align*}
R&=\mathrm{diag}\{\alpha_1, \dots,\alpha_n\},\quad \Gamma=\mathrm{diag}\{\gamma_1, \dots,\gamma_n\}.
\end{align*}
Let the induced steady state angle $\theta^s$ be given by~\eqref{eq: powe-bal} and define the following function, that is used in deriving our main result,
\begin{align}
\label{eq: LF}
V(\theta)&=\frac{1}{2} \norm{\theta-\theta^s}^2_\Gamma\\
&+\sum_{k=1}^n \sum_{j\in\mathcal{N}_k}  b_{kj} \left(\cos(\theta_{kj})- \cos(\theta_{kj}^s)-(\theta_{kj}-\theta^s_{kj}) \sin(\theta^s_{kj})\right). \nonumber
\end{align}
Our main result is summarized in the following proposition.
\begin{proposition}
\label{prop: main-res}
Consider the optimal control problem~\eqref{eq: main-prob} under Assumption~\ref{ass: bounded-sol}.
Then, the following statements hold:
\begin{enumerate}
\item[i)] The optimal solution of~\eqref{eq: main-prob} at the $k-$th converter in a neighborhood of $\theta^s=\{\theta^s_k\}_{k=1}^n$ is the angular droop control defined as,
\begin{align}
\label{eq: opt-ctrl}
u_{k}^*(\theta)=-\frac{1}{2\alpha_k}\left(\gamma_k (\theta_k-\theta^*_k)+ P_{e,k}(\theta)-P^*_{e,k} \right).
\end{align}
\item[ii)] The steady state angle $\theta^s=\{\theta^s_k\}_{k=1}^n$ is locally asymptotically stable for the closed-loop system (i.e.,~\eqref{eq: conv-dyn} together with~\eqref{eq: opt-ctrl}).

\end{enumerate}
\end{proposition}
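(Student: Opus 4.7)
The plan is to recognize the optimization problem~\eqref{eq: main-prob} as a direct instance of the inverse optimal control framework of Theorem~\ref{thm: H2-ctrl}. Two reductions make the fit explicit. First, I pass to shifted coordinates $\tilde\theta(t) := \theta(t) - \theta^*(t)$, where $\theta^*(t) = \omega^*\mathds{1}_n t + \theta^*_0$. Since $P_{e,k}$ depends only on angle differences and $\theta^*_k(t) - \theta^*_j(t) = \theta^*_{0,k} - \theta^*_{0,j}$ is constant in time, the running cost becomes time-invariant in $\tilde\theta$, and the dynamics reduce to $\dot{\tilde\theta} = u$. This matches $\dot x = H^\top(x) u$ in Theorem~\ref{thm: H2-ctrl} with $H = I_n$. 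The induced steady state $\theta^s$ corresponds to a constant equilibrium $\tilde\theta^s$ of the shifted system, determined by the power balance~\eqref{eq: powe-bal}.

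Next, I identify the running cost of~\eqref{eq: main-prob} with the $\|u\|^2_{\overline R} + q(x)$ form of Theorem~\ref{thm: H2-ctrl} via $\overline R = R$ and the Lyapunov function $V$ in~\eqref{eq: LF}. Computing $\nabla V$: the quadratic part contributes $\Gamma(\theta - \theta^s)$, while the Bregman-type trigonometric terms contribute the power mismatch $P_{e,k}(\theta) - P_{e,k}(\theta^s)$ per node. Substituting the steady-state identity $\gamma_k(\theta^s_k - \theta^*_k) = P^*_{e,k} - P_{e,k}(\theta^s)$ from~\eqref{eq: powe-bal} yields the pivotal identity
\begin{align*}
\nabla_{\theta_k} V \;=\; \gamma_k(\theta_k - \theta^*_k) + P_{e,k}(\theta) - P^*_{e,k}.
\end{align*}
Hence the candidate $u^*(\theta) = -\tfrac12 R^{-1}\nabla V$ reproduces the angular droop law~\eqref{eq: opt-ctrl}, and the running-cost integrand in~\eqref{eq: main-prob} coincides with $\|u\|^2_R + \tfrac14 \|\nabla V\|^2_{R^{-1}}$, i.e.\ with $\|u\|^2_{\overline R} + q(\theta)$ for $q$ as defined in~\eqref{eq: q-x}.

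It remains to verify the hypotheses of Theorem~\ref{thm: H2-ctrl} in a neighborhood of $\tilde\theta^s$ and to handle claim~(ii). A second differentiation gives $\nabla^2 V|_{\tilde\theta^s} = \Gamma + \mathcal{B}\,\mathrm{diag}\bigl(b_{kj}\cos(\theta^s_{kj})\bigr)\,\mathcal{B}^\top$, which is positive definite under Assumption~\ref{ass: bounded-sol} since $\cos(\theta^s_{kj}) > 0$. Thus $V$ is a valid local control Lyapunov function with $V(\tilde\theta^s) = 0$, and the sign condition $\nabla V^\top u^* < -\|u^*\|^2_R$ reduces to $\tfrac14 \|\nabla V\|^2_{R^{-1}} > 0$ away from $\tilde\theta^s$. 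Theorem~\ref{thm: H2-ctrl} yields claim~(i). For claim~(ii), $V$ itself serves as a strict Lyapunov function in shifted coordinates: along closed-loop trajectories $\dot V = \nabla V^\top u^* = -\tfrac12 \|\nabla V\|^2_{R^{-1}} \leq 0$, with equality only at $\tilde\theta = \tilde\theta^s$, and local asymptotic stability follows from LaSalle's invariance principle (or directly from local strong convexity of $V$). The main subtlety lies in the gradient computation, which combines the Bregman-type structure of $V$ in~\eqref{eq: LF} with the power-balance relation~\eqref{eq: powe-bal} to produce the concise feedback~\eqref{eq: opt-ctrl}; the remaining verifications are routine once the shifted coordinates are in place.
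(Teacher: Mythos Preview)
Your proof is correct and follows essentially the same approach as the paper: both identify $V$ in~\eqref{eq: LF} as the control Lyapunov/value function, compute $\nabla V$ and simplify it via the steady-state identity~\eqref{eq: powe-bal}, verify the hypotheses of Theorem~\ref{thm: H2-ctrl} locally, and conclude stability from $\dot V = -\tfrac12\|\nabla V\|^2_{R^{-1}}$ via an invariance argument. Your explicit passage to shifted coordinates $\tilde\theta = \theta - \theta^*(t)$ and your direct Hessian check for positive definiteness (in place of the paper's Bregman/convexity decomposition $V = V_1 + V_2$) are minor presentational variations rather than a different route.
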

\begin{proof}
The proof relies on the observation that the optimal control problem~\eqref{eq: main-prob} satisfies the conditions of Theorem~\ref{thm: H2-ctrl} {\em locally}, i.e., in the vicinity of the induced steady state angle~$\theta^s$.

First, we establish the positive definiteness of the function $V$ around $\theta^s$. That is, we establish that $V(\theta^s)=0$ and $V(\theta)>0$ for $\theta\neq \theta^s$ with $\theta$ being in a neighborhood of $\theta^s$. For this, we follow a similar approach to~\cite{monshizadeh2017stability} and define $V_1(\theta)=\frac{1}{2} \norm{\theta-\theta^s}^2_\Gamma$ and $V_2(\theta)=W_2(\theta)-W_2(\theta^s)-(\theta-\theta^s)^\top \nabla_\theta W_2(\theta^s)$ with, $$W_2(\theta)=-\mathds{1}^\top_n\, \Xi\, \underline{\cos}(\mathcal{B}^\top \theta),$$
to rewrite the function $V(\theta)$ in~\eqref{eq: LF} as,
$V(\theta)= V_1(\theta)+ V_2(\theta).$

Note that $V_1$ is clearly positive definite around $\theta^s$. $V_2$ is positive definite around $\theta^s$ if $W_2$ is strictly convex around $\theta^s$. To show that $W_2$ is strictly convex around $\theta^s$, we introduce the coordinate change $\eta:= \mathcal{B}^\top \theta$ and calculate $\nabla^2_\eta W_2(\eta)=\Xi\; \underline{\cos}(\eta)$.
Under Assumption~\ref{ass: bounded-sol}, it holds that $\eta^s:=\mathcal{B}^\top \theta^s\in (-\frac{\pi}{2},\frac{\pi}{2})^m$ and hence $\nabla^2_\eta W_2(\eta)>0$, for $\eta$ in the neighborhood of $\eta^s$. This shows that $W_2(\eta)$ is strictly convex around $\eta^s$. Since strict convexity is invariant under affine maps, $W_2(\theta)$ is strictly convex around $\theta^s$. From the argumentation above, we deduce that $V_2$ and therefore $V$ is positive definite around $\theta^s$.

Second, we seek to apply Theorem~\ref{thm: H2-ctrl}. The gradient of $V(\theta)$ can be equivalently expressed as,
\begin{align}
\label{eq: gradient}
\nabla_\theta V&=\Gamma (\theta-\theta^s)+P_e(\theta)-P_e(\theta^s),  \\
&=\Gamma(\theta-\theta^*)+P_e(\theta)-P^*_e +\overbrace{\Gamma (\theta^*-\theta^s)+P^*_e-P_e(\theta^s)}^{=0}, \nonumber\\
&=\Gamma(\theta-\theta^*)+P_e(\theta)-P^*_e,\nonumber 
\end{align}
where $P_e(\theta)=[P_{e,1}(\theta),\dots,P_{e,n}(\theta)]^\top$, $P^*_e=[P^*_{e,1},\dots,P^*_{e,n}]^\top$ and the last term in the second step is zero by the induced steady state equation~\eqref{eq: powe-bal}.
This means that the control law~\eqref{eq: opt-ctrl} takes the form, 
$u^*(\theta)= -\frac{1}{2} R^{-1}\nabla_\theta V.$    
By left-multiplying with the gradient of $V$, it can be deduced that,
$$\dot V(\theta)=\nabla_\theta^\top V\, u^*(\theta)=-\frac{1}{2}\nabla_\theta^\top V\, R^{-1}\,\nabla_\theta V.$$
Denote by $\Omega$ a neighborhood of $\theta^s$. Note that $V$ is positive definite on $\Omega$ and $\dot V(\theta)\leq 0$ for all $\theta \in\Omega$. Let $S=\{\theta\in\Omega, \dot V(\theta)=0 \}$. The only trajectory that can stay in $S$ is where the gradient of $V$ given in \eqref{eq: gradient} vanishes, that is, only at $\theta=\theta^s$. By the Barbashin-Krasovskii theorem~\cite[Corollary 4.1]{K02}, the steady state angle $\theta^s$ is locally asymptotically stable.
Now, we write, $$\norm{u^*(\theta)}^2_R= \frac{1}{4}\nabla_\theta^\top V\, R^{-1}\,\nabla_\theta V.$$
Hence, for all $\theta\in\Omega$, $\nabla_\theta^\top V\, u^*(\theta)<-\norm{u^*(\theta)}^2_R.$
The cost functional~\eqref{eq: main-prob} can be compactly expressed as, $\int^\infty_0\norm{u(\theta)}^2_R+q(\theta)\, \dd s$,
with $$q(\theta)=-\nabla_\theta V^\top u^*(\theta) -\norm{u^{*}(\theta)}^2_R
=\frac{1}{4}\nabla_\theta V^\top R^{-1} \nabla_\theta V,$$ as given in~\eqref{eq: q-x} and explicitly written in~\eqref{eq: main-prob}. 

All in all, the control problem~\eqref{eq: main-prob} satisfies the conditions of Theorem~\ref{thm: H2-ctrl} {\em locally}, in a neighborhood of~$\theta^s$. It follows that~\eqref{eq: opt-ctrl} is an inverse optimal locally stabilizing control law for the system dynamics in~\eqref{eq: main-prob} and $V(\theta_0)$ in~\eqref{eq: LF} is the value function of~\eqref{eq: main-prob}. 
\end{proof}
The angular droop control~\eqref{eq: opt-ctrl} is {\em distributed}, i.e., it requires only knowledge of the neighboring angles $\theta_j, j\in\mathcal{N}_k, k\in\mathcal{V}$. Nonetheless, it can be implemented in a fully {\em decentralized} fashion by measuring the active power $P_{e,k}$ using PMUs. It is grid-forming according to definitions in~\cite{denis2017grid} and its tuning is easily understood: If the control gain $\alpha_k$ is { smaller}, more control effort is allowed { at the $k-$th converter}, and the rate of convergence towards an induced steady state angle $\theta^s$ is faster.
In this sense, the input matrix $R>0$ is a tuning knob that allows us to study combinations of the input penalty, while keeping the same value function.



\begin{remark}[LQR control]
{ Let $\Gamma=\mathrm{diag}\{\gamma_1, \dots,\gamma_n\}$, $R=\mathrm{diag}\{\alpha_1, \dots, \alpha_n\}$.} By linearizing { the cost functional~\eqref{eq: main-prob}} around $\theta=\theta^*$, { it} can be written as, 
	\begin{align}
	\label{eq: LQR}
	\int^\infty_0 u(s)^\top R \, u(s)+ \, (\theta(s)-\theta^*)^\top\, \overline Q\, \, (\theta(s)-\theta^*)\, \dd s,
	\end{align}
	where $\overline Q=\frac{1}{4} (\Gamma+ \mathcal{L})^\top R^{-1}(\Gamma+ \mathcal{L})$ and $\mathcal{L}=\mathcal{B}\, \Xi\, \mathcal{B}^\top$. Hence, the optimal control problem~\eqref{eq: main-prob} becomes an LQR problem~\cite{K02}. As delineated in~\cite{9429728}, after linearization around $\theta=\theta^*$, the control law~\eqref{eq: opt-ctrl} becomes, 
	\begin{align}
	    \label{eq: H2-ctrl}
	    u_{\text{LQR}}^*(\theta)=-\frac{1}{2}R^{-1} (\Gamma+\, \mathcal{L}) \,  (\theta-\theta^*),
	\end{align}
	and represents the $\mathcal{H}_2-$optimal controller of~\eqref{eq: LQR}.
\end{remark}

{ \section{Disturbance rejection and scalability: The linear case}
 \label{sec: distrubance-rej}
In this section, we follow the analysis in~\cite{tegling2017lcss,andreasson2017coherence} to compare the linearized angular droop controller~\eqref{eq: H2-ctrl} to standard frequency droop control from a transient performance perspective, that is, how well random disturbances are attenuated. In particular, we use the analysis framework from~\cite{tegling2017lcss,andreasson2017coherence} to demonstrate that the angle-based control~\eqref{eq:angdroopemma} can fundamentally improve the controller’s performance with respect to network size, and thereby its scalability to large networks.



\subsection{Disturbance attenuation and scalability: The linear case}
For this analysis, the closed-loop system dynamics are linearized around the desired steady state given by $\theta^*$, and the nominal frequency $\omega^*$. To simplify notation throughout this section, let the state vectors $\theta$ and $\omega$ represent deviations from nominal steady state. 

We assume that the system dynamics are subject to a disturbance $\eta= [\eta_1,\ldots,\eta_n]^\top$, which captures variations in generation and loads,  and into which we have also absorbed the constant power injections~$P^*$. The disturbance $\eta$ is modeled as a persistent stochastic variable, uncorrelated across converters. More precisely, we let $\eta$ be zero-mean white noise, such that $\mathbb{E}\{\eta(\tau)\eta^\top(t)\} = \delta(t-\tau)I_n$, where $\delta$ is the Dirac delta function. We refer the reader to~\cite{tegling2015price, tegling2017lcss} for more details on the disturbance model, as well as alternative input scenarios. 


Consider the linearized version of the angular droop controller~\eqref{eq: opt-ctrl} with, 
\begin{align}
\label{eq:angdroopemma}
\dot{\theta}&= -\frac{1}{2} R^{-1} (\Gamma+ L)\, (\theta-\theta^*)+\eta,
\end{align}
where $L=\mathcal{B}\, \Xi\, \mathcal{B}^\top$. 
On the other hand, the frequency droop control is given by,
\begin{equation}
\begin{aligned}
\begin{bmatrix}
\dot{\theta} \\ \dot{\omega} 
\end{bmatrix} = \begin{bmatrix}
0 & I_n \\ -M^{-1}\mathcal{L} & -M^{-1}D
\end{bmatrix}\begin{bmatrix}
\theta \\ \omega
\end{bmatrix} + \begin{bmatrix}
0\\ M^{-1}
\end{bmatrix} \eta, 
\end{aligned}
\label{eq:frequdroopemma}
\end{equation}
where $M$ and $D$ are diagonal matrices collecting all the inertia $m_i>0$ and damping coefficients $d_i>0$, respectively, with $i=1,\dots,n$.
Here, we have assumed both linearized power systems~\eqref{eq:angdroopemma} and~\eqref{eq:frequdroopemma} are subject to a disturbance input $\eta$.

\paragraph{Performance metric}
We evaluate the performance of the systems~\eqref{eq:angdroopemma} and~\eqref{eq:frequdroopemma} in terms of the following metric, given as \hn norm of an input-output system from the input $\eta$ to a suitably defined performance output. 
\begin{definition}[Angle coherence~\cite{tegling2017lcss}]
\label{def: angl-coh}
The angle coherence metric captures the steady-state variance of the converters' angle deviation from the network average, normalized by the network size $n$ and given by,
\begin{equation}
||\mathcal{S}||^2_{\mathrm{coh}} = \lim_{t \rightarrow\infty} \frac{1}{n}\sum_{i \in V} \mathbb{E}\left\lbrace \left( \theta_i(t) -  \overline\theta(t)  \right)^2 \right\rbrace,
\label{eq:perfmeas}
\end{equation} 
where $\overline \theta(t)=\frac{1}{n}\sum\limits_{i=1}^n \theta_i(t)$ is the average angle error.
\end{definition}

\vspace{1mm}

The performance metric in Definition~\ref{def: angl-coh} is given as the squared \hn norms of the systems~\eqref{eq:angdroopemma} and~\eqref{eq:frequdroopemma} with the performance output,
\begin{align*}
y_{\mathrm{coh}} & = \frac{1}{\sqrt{n}}\left(I_n - \frac{1}{n}\mathds{1}_n\mathds{1}_n^\top \right){\theta}.
\end{align*}

\paragraph{Comparison of angle and frequency droop}
We first make the following assumption for tractability purposes: 
\begin{assumption}
\label{ass:uniform}
Let the controller gains and parameters be uniform across all converters, i.e., $\alpha_i = \alpha$, $\gamma_i = \gamma$, $m_i = m$, and $d_i = d$ for all $i \in V$. 
\end{assumption}
Consider the following result: 
\begin{result}
\label{prop:perf}
Consider the linearized closed-loop dynamics first with the angular~\eqref{eq:angdroopemma} and second with the frequency droop~\eqref{eq:frequdroopemma} { under} Assumption~\ref{ass:uniform}. A comparative system performance is given in Table~\ref{tab:perfresults}.
\begin{table}[h!]
\caption{{Comparison} of linearized angle vs. frequency droop}
\begin{center}
 \setlength{\tabcolsep}{0.5em} 
 {\renewcommand{\arraystretch}{1.9}
\begin{tabular}{|c|c|c|}
\hline 
 &Angular droop~\eqref{eq:angdroopemma}& Frequency droop~\eqref{eq:frequdroopemma} \\ 
\hline 
Angle coherence & $||\mathcal{S}||^2_{\mathrm{coh}}= \frac{\alpha}{n}\sum\limits_{i = 2}^n \frac{1}{\gamma + \lambda_i}$ & $ ||\mathcal{S}||^2_{\mathrm{coh}}=\frac{1}{2\,d\,n}\sum\limits_{i = 2}^n \frac{1}{\lambda_i}$ \\ 
\hline 
\end{tabular}}
\end{center}
\label{tab:perfresults}
\end{table}

%
\end{result}
The proof follows that of~\cite[Lemma 1]{tegling2017lcss},~\cite{6580749} and is omitted here. 
From Table~\ref{tab:perfresults}, we make the following observations:
\begin{enumerate}
\item 
With angular droop, it is possible to state a uniform upper bound on the angle coherence. In particular, $||\mathcal{S}||^2_{\mathrm{coh}} < {\alpha}/{\gamma}$, which holds for {\em any} network size $n\in\mathds{N}$ and \emph{independently} of the graph topology. On the other hand, the performance for frequency droop, is proportional to $\frac{1}{n}\sum_{i = 2}^n\frac{1}{\lambda_i}$. This expression is  well-studied in the coherence literature, see e.g.,~\cite{Barooah2007, Bamieh2012}. In general, it cannot be uniformly bounded in $n$. Instead, it grows with $n$ for sparse network graphs, including, for example, tree graphs and graphs that can be embedded in two-dimensional lattices (e.g., planar graphs)~\cite{Barooah2007}. This leads to a performance degradation for large-scale networks. 
In summary, angular droop has fundamentally better scaling properties than frequency droop, leading to a better disturbance rejection for large, sparse graphs.  This is illustrated later through test case 2 in Section~\ref{sec: sims}. 


\item  We observe that for the angular droop~\eqref{eq:angdroopemma}, a small positive gain $\alpha$ minimizes the angle coherence. Similarly, increasing the damping gain $\gamma$ improves our performance metric. The droop gain $\gamma$ plays the same role for the angular droop as the gain $d$ for the frequency droop control. 
Thus, based on this $\mathcal{H}_2$ performance analysis we can select the control gains for an improved transient performance of the underlying power system model.



\end{enumerate}

\begin{remark}
The assumption on uniform controller parameters is only for mathematical tractability. It is, however, not important for the conclusion that the angle coherence is uniformly bounded for the angular droop~\eqref{eq:angdroopemma} only. For heterogeneous parameters, bounds can simply be stated in terms of the smallest and the largest gains. 
\end{remark}
}
\section{Implementation and numerical simulations}
\label{sec: sims}
{ Even though our previous analysis neglects the internal dynamics of each converter} in the { optimal} control synthesis, we propose a practical design of the angular droop control~\eqref{eq: opt-ctrl} for a network of high-order DC/AC converters. We numerically demonstrate in the next section that { high-order converter models can be accounted for}. 
\begin{figure}[ht!]
    \centering
    \includegraphics[width=.9\linewidth, trim= 0 4cm 0 2cm]{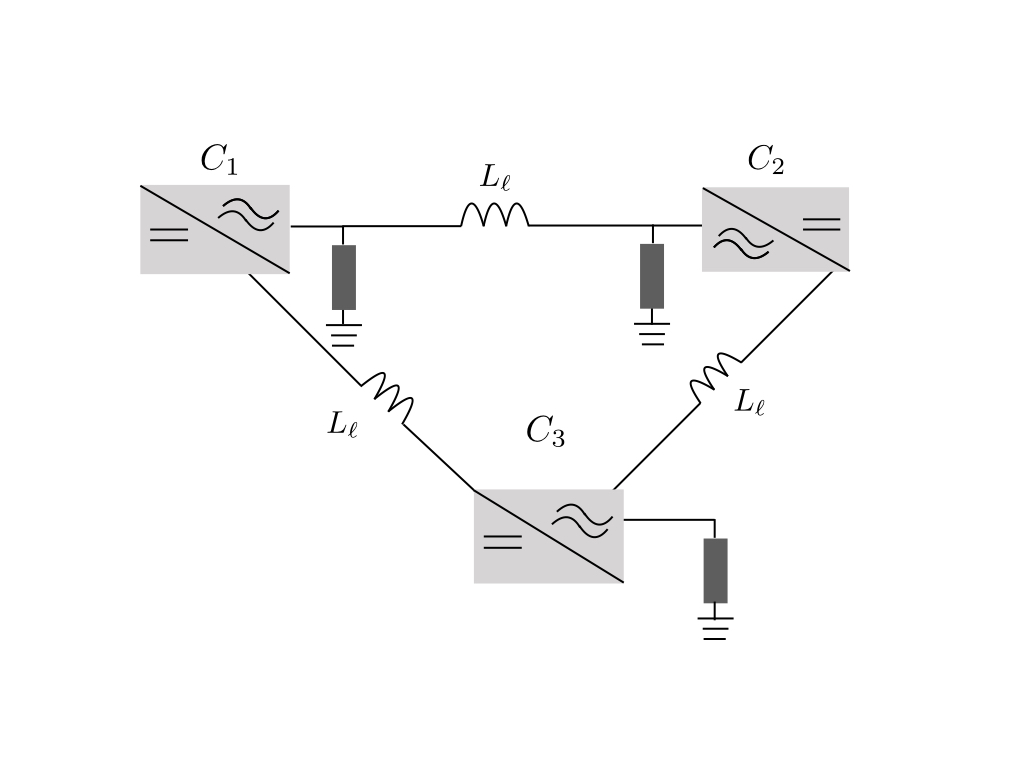}
    \caption{Three {high-order} DC/AC converter system described by the dynamics~\eqref{eq: open-loop} in closed-loop with angular droop~\eqref{eq: angle-droop}.}
    \label{fig: test-case-2-setup}
\end{figure}
{\subsection{Test case 1: Angular droop control}} 
For this, we consider the following three-phase averaged and balanced DC/AC converter dynamics in $\alpha\beta-$frame{~\cite{kundur1994power}}, adapted from~\cite{jouini2020steady},
\begin{equation}
\begin{aligned}
\label{eq: open-loop}
C_{dc} \dot v_{dc} &= -K_p\, (v_{dc}-v_{dc}^*\mathds{1}_n) - \frac{1}{2}U^\top i + i^*_{dc},  \\ 
L \,\dot i&= -R \,i + \frac{1}{2} U \,v_{dc} -v , \\
C \,\dot v&= -G\, v +i- \mathbf{B}\, i_{\ell},   \\
L_{\ell} \dot i_{\ell}&= -R_{\ell}\, i_{\ell} + \mathbf{B}^\top v,
\end{aligned}
\end{equation}
where the system parameters are summarized in Table~\ref{tab: parameters}. Note that the modulation signal $\overline u_k\in\real^{2}$, collected in the matrix $U$, represents the main input to the $k-$th DC/AC converter.

\begin{table}[h!]
\caption{Parameters of the multi converter system in Fig.~\ref{fig: test-case-2-setup} and~\ref{fig:summ-diag}.}
\begin{center}
	\resizebox{.5\textwidth}{!}{
			\begin{tabular}{lllc}
				\hline
				Symbol & Definition & Range & Value\\
				\hline
				$\overline u_k$  & modulation signal &$\real^{2}$&--\\
				$A$ & modulation amplitude & $[0 , 1]$& $0.33$\\
				$U=\text{diag}(\overline u_1, \dots, \overline u_n)$ & matrix of 
				input signals & $\real^{2n\times n}$ & --\\
				$v_{dc}^*$ & nominal DC voltage &$\real_{>0}$& 1000\\
				$i^*_{dc}$ & nominal DC current source &$\real^n$& $500\cdot 
				\mathds{1}_3$\\
				$C_{dc}$ & DC capacitance &$\real_{>0}$& $10^{-3}$\\
				$K_p$ & DC side control gain & 
				$\real_{>0}$& $0.5$ \\
				$R$ & AC { filter} resistance & $\real_{>0}$& $0.2$\\
				$L $ & AC { filter} inductance & $\real_{>0}$& $5\cdot 10^{-4}$\\
				$C $ & AC { filter} capacitance & $\real_{>0}$& $10^{-5}$\\
				$G $ & AC { filter} conductance & $\real_{>0}$& $0.1$\\ 
				\hline
				$L_{\ell}$ & line inductance & $\real_{>0}$& $5\cdot 10^{-5}$\\
				\hline
				$\alpha_k=\alpha, k=1\dots n$ & control gain  & $\real_{>0}$& $0.5$\\
				$\gamma_k=\gamma, k=1\dots n$ & droop gain & $\real_{>0}$& $10^6$\\
				$\mathbf{B}=I_2\otimes\mathcal{B}$ & extended incidence matrix & 
				$\real^{2n\times 2m}$&--\\
				\hline
				$v_{dc}=[v_{dc,1},\dots, v_{dc,n}]^\top$ &  DC capacitor voltage & 
				$\real^{n}$&-- \\
				$v=[v^\top_1,\dots, v^\top_n]^\top$ &  AC capacitor voltage & 
				$\real^{2n}$& --\\
				$i=[i^\top_1,\dots, i^\top_n]^\top$ &  AC inductance current & 
				$\real^{2n}$&--\\
				$i_{\ell}=[i^\top_{\ell,1}, \dots, i^\top_{\ell,m}]^\top$ & AC 
				line current & $\real^{2{ m}}$&--\\
				\hline
			\end{tabular}}
		\end{center}
		\label{tab: parameters}
	\end{table}

 

After introducing $i_{net}= \mathbf{B} i_{\ell}$ and defining the active power $\widehat P_{e,k}=v_k^\top i_{net,k}$, as well as the nominal steady state active power $\widehat P^*_{e,k}=v_k^{*\top} i_{net,k}^*$ at the $k-$th converter, we propose to implement the angular droop controller as follows,
\begin{align}
\label{eq: angle-droop}
\dot{\theta}_k &=-\frac{1}{2\alpha_k}\left(\gamma_k (\theta_k-\theta_k^*)+(\widehat P_{e,k}-\widehat P^*_{e,k}) \right)+\omega^*, \\ 
\overline u_k&=A\begin{bmatrix}
\cos(\theta_k) \\ \sin(\theta_k)   
\end{bmatrix},\nonumber
\end{align}
where $0<A<1$ is the amplitude of the control input. In Figure~\ref{fig:summ-diag}, we depict a summarizing block diagram of a single DC/AC converter whose system dynamics are given by~\eqref{eq: open-loop}, set in closed loop with the angular droop control~\eqref{eq: angle-droop}. Note that in this setup, the angular droop control~\eqref{eq: angle-droop} increments the converter internal dynamics with a virtual angle dynamics $\dot\theta_k$ that represents the phase angle of the modulation signal $\overline u_k$.

Next, we consider three DC/AC converters with open-loop dynamics described in~\eqref{eq: open-loop} in closed-loop with the angular droop control~\eqref{eq: angle-droop} as depicted in Figure~\ref{fig: test-case-2-setup}. The { desired} steady state angles are given (in rad) by $\theta^*_1(0)=0.951, \theta^*_2(0)=0.92, \theta_3^*(0)=0.967$, and thus satisfy Assumption \ref{ass: bounded-sol}. We select the control gains uniformly for all three converters with parameter values in Table~\ref{tab: parameters}. 
\begin{figure}[ht!]
    \centering
    \includegraphics[width=0.85\linewidth]{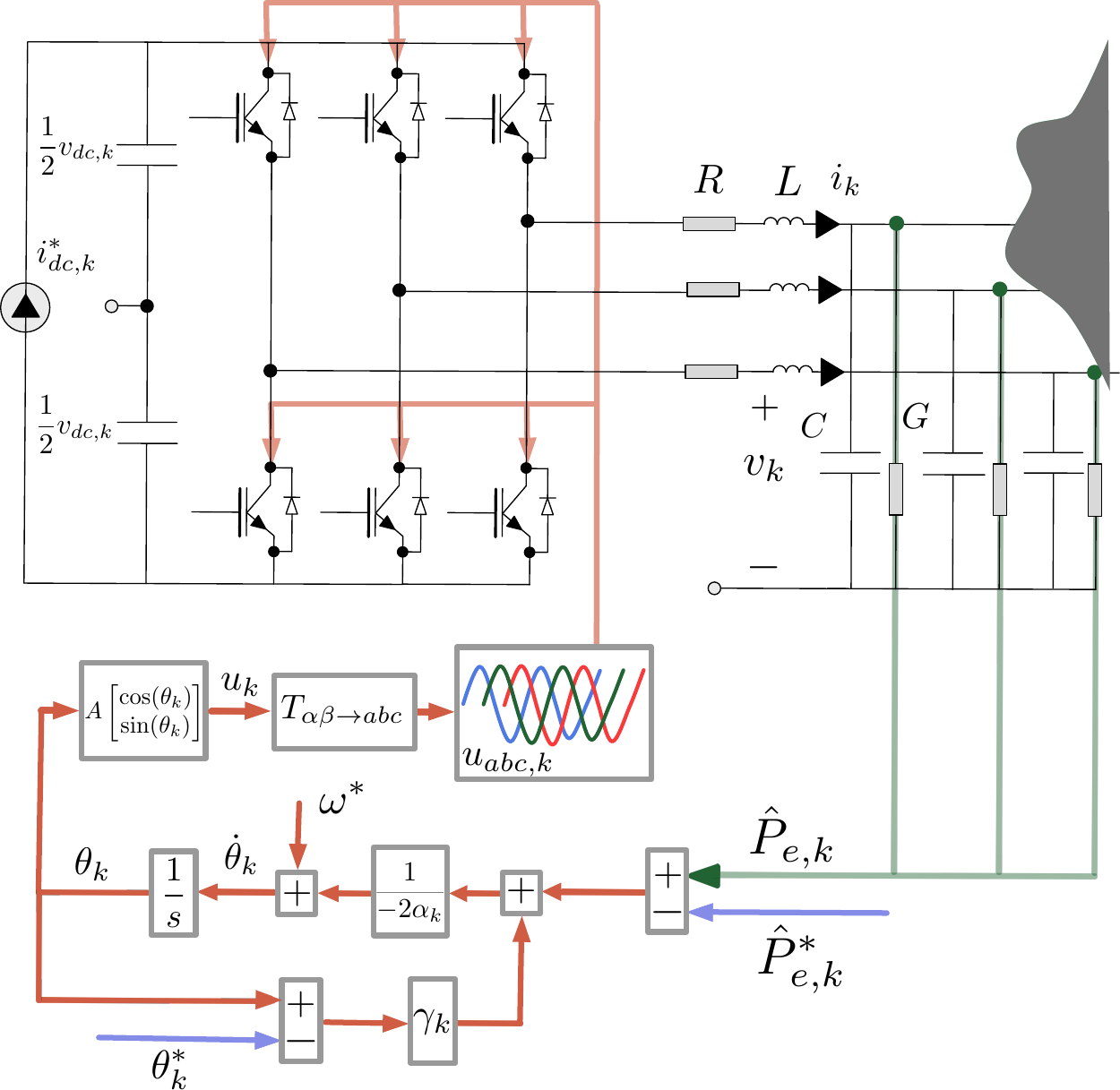} 
    \caption{Block diagram of the interconnection of a single three-phase balanced and averaged DC/AC converter with~\eqref{eq: open-loop} and~\eqref{eq: angle-droop}. The green arrows represent PMUs measurements. $T_{\alpha\beta \to abc}$ is the inverse of the Clark transformation~\cite{kundur1994power}.}
    \label{fig:summ-diag}
\end{figure}

We demonstrate the effectiveness of the proposed optimal controller both for angle stability and frequency synchonization via time-domain simulations before (under nominal conditions) and after an event corresponding to an increase in the load consumption at one of the converters. Fig.~\ref{fig: stability} illustrates angle stability for the initial angle values $\theta_1(0)=0.92, \theta_2(0)=0.90, \theta_3(0)=0.93$. We observe in simulations that a decrease in the gain $\alpha$ improves the angle transients, i.e., { it} results in faster convergence of the angles towards the induced steady state angle. Notice the first-order behavior of the phase angle trajectories dictated by~\eqref{eq: angle-droop}, while converging to their respective steady state values. Similarly, the frequencies synchronize at the nominal steady value $\omega^*=2\,\pi\, 50\, \text{rad/s}$. Fig.~\ref{fig: droop} illustrates the droop behavior in the phase angle after a sudden change in the load consumption and the corresponding effect on the frequency at the affected converter $(C_1)$. Note that the gain $\gamma>0$ defines the droop behavior between a sudden power change and the angle deviation at steady state. The angle drops correspond to peaks in the frequency time evolution, while the frequency error remains zero, also during the event.
\begin{figure}
    \centering
    \includegraphics[width=\linewidth]{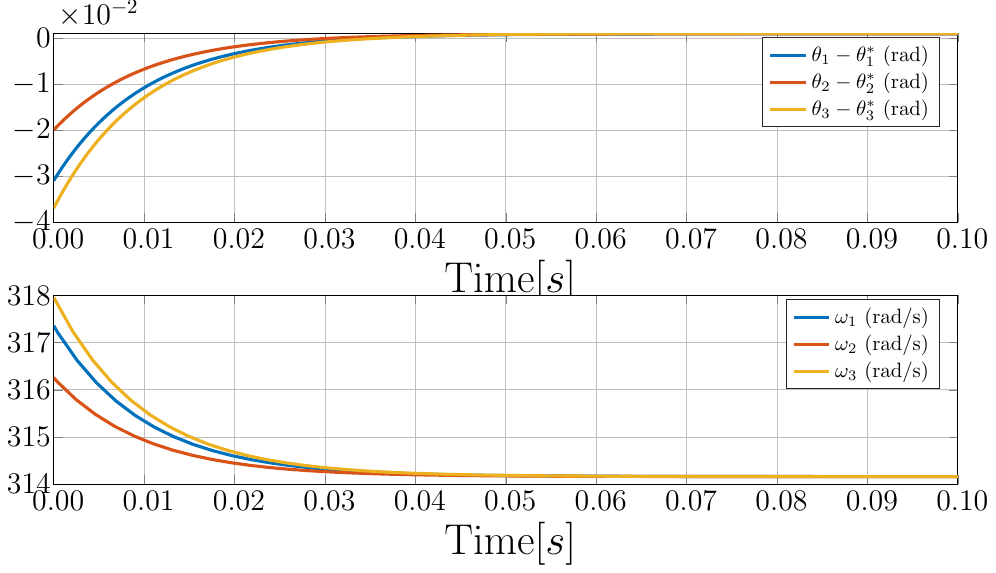}
    \caption{Time evolution of the converters' angle errors (in rad) with respect to the steady state $\theta^*$ initialized at $\theta_1(0)=0.92, \theta_2(0)=0.90, \theta_3(0)=0.93$ and frequency synchronization at $\omega^*=2\pi 50 \, \text{rad/s}$, for the setup in Fig.~\ref{fig: test-case-2-setup}.}
    \label{fig: stability}
\end{figure}
\begin{figure}
    \centering
    \includegraphics[width=\linewidth]{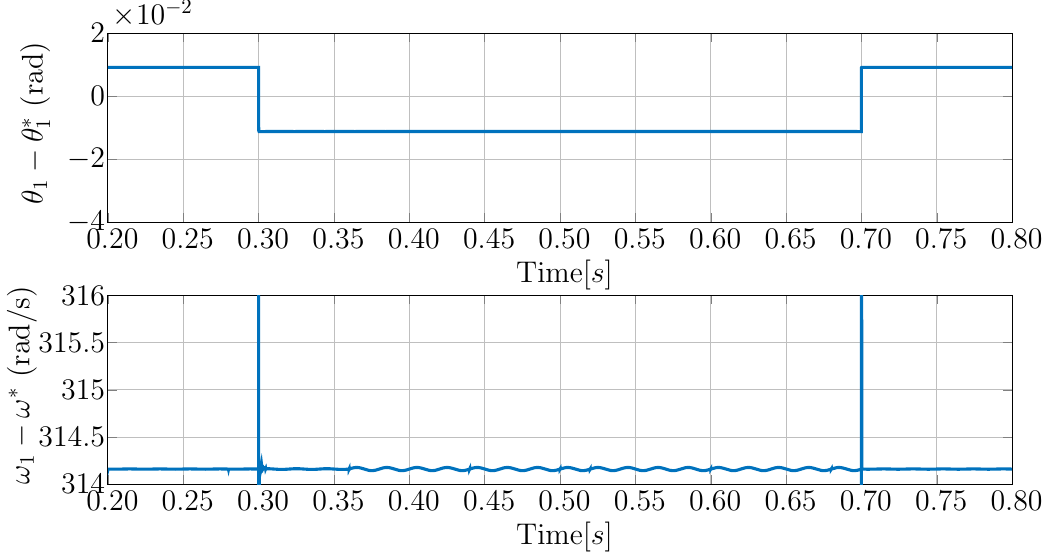}
    \caption{$P-\theta$ droop illustrated at the converter 1 (C1) angle and frequency after a sudden increase in the load consumption from $t=0.3$s to $t=0.7$s. The converter angle converges to the induced steady state angle $\theta_1^s$  during the load disturbance.}
    \label{fig: droop}
\end{figure}

Finally, we note that angular droop~\eqref{eq: opt-ctrl} has been numerically tested in~\cite{6987381,7419922} on different setups involving radial and loopy distribution systems. 

\subsection{ Test case 2: Comparison with frequency droop control} 
{ For the second test case, we compare qualitatively the transient performance { (see Definition 1 in~\cite{jouini2021inverse})} of angular and frequency droop after linearization, in a scalability analysis that is analogous to~\cite{andreasson2017coherence}. For this, consider the angular control~\eqref{eq: H2-ctrl} and frequency droop given by~\cite{andreasson2017coherence} with the same droop coefficients. We model two example path graph networks, first with 10 nodes and later with 100 nodes interconnected via inductive lines of unit susceptance (in p.u). { Then,} we subject the closed-loop dynamics to arbitrary initial angular perturbations.
The deviation of the angle error trajectories $\theta-\theta^*$ is depicted in Figure~\ref{fig: compare}. We observe that the convergence to a steady state is faster with the angular droop for both networks, i.e., a better transient performance (compare a) to c) and b) to d)). More importantly, however, we note that, as the network size grows from 10 to 100 nodes, the frequency droop shows a significantly degraded transient performance (compare d) to c)), while the angular droop shows similar transient performance for the larger network (in b)) as for the smaller one (in a)), and thus a better scalability. 

}

\begin{figure}
    \centering
    \includegraphics[width=\linewidth]{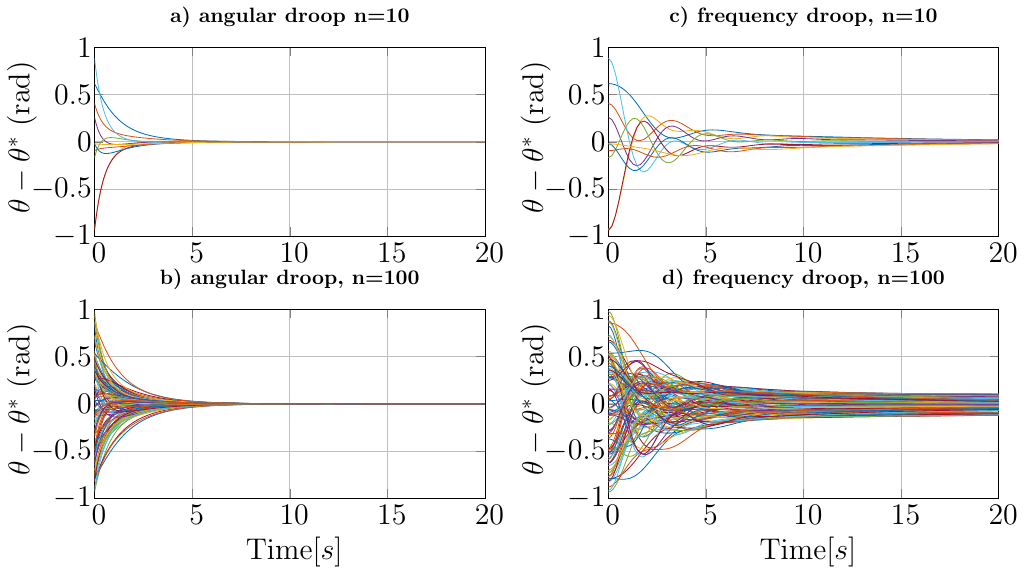}
    \caption{A comparison of the transient performance between the linearized angular droop \eqref{eq: H2-ctrl} displayed in a) and b) and the frequency droop~\cite{andreasson2017coherence} in c) and d) for a path network, where the network size increases from $n=10$ in a) and c) to $n=100$ nodes in b) and d).}
    \label{fig: compare}
\end{figure}

\section{Conclusion}
\label{sec: concl}
In this work, we proposed novel insights into the { design} of the angular droop control, that { establishes its optimality, while accounting} for phase angle stability with zero stationary frequency error. The angular droop control is distributed and showcases the utility of inverse optimal control theory in networked settings, and is numerically tested on power system simulations. 
It is of our future interest to study the stability of the angular droop control, while including internal DC/AC converter dynamics.




\bibliographystyle{ieeetran.bst}
\bibliography{root.bib}

\begin{thebibliography}{10}
\providecommand{\url}[1]{#1}
\csname url@samestyle\endcsname
\providecommand{\newblock}{\relax}
\providecommand{\bibinfo}[2]{#2}
\providecommand{\BIBentrySTDinterwordspacing}{\spaceskip=0pt\relax}
\providecommand{\BIBentryALTinterwordstretchfactor}{4}
\providecommand{\BIBentryALTinterwordspacing}{\spaceskip=\fontdimen2\font plus
\BIBentryALTinterwordstretchfactor\fontdimen3\font minus
  \fontdimen4\font\relax}
\providecommand{\BIBforeignlanguage}[2]{{%
\expandafter\ifx\csname l@#1\endcsname\relax
\typeout{** WARNING: IEEEtran.bst: No hyphenation pattern has been}%
\typeout{** loaded for the language `#1'. Using the pattern for}%
\typeout{** the default language instead.}%
\else
\language=\csname l@#1\endcsname
\fi
#2}}
\providecommand{\BIBdecl}{\relax}
\BIBdecl

\bibitem{operator2017black}
{Australian Energy Market Operator (AEMO)}, ``Black system {S}outh {A}ustralia,
  28 {S}eptember 2016,'' Tech. Rep., 2017, {Available:}
  \url{http://www.aemo.com.au/Electricity/National-Electricity-Market-NEM/Market-
  notices-and-events/Power-System-Operating-Incident-Reports}.

\bibitem{paolone2020fundamentals}
M.~Paolone, T.~Gaunt, X.~Guillaud, M.~Liserre, S.~Meliopoulos, A.~Monti,
  T.~Van~Cutsem, V.~Vittal, and C.~Vournas, ``Fundamentals of power systems
  modelling in the presence of converter-interfaced generation,''
  \emph{Electric Power Systems Research}, vol. 189, p. 106811, 2020.

\bibitem{arghir2019electronic}
C.~Arghir and F.~D{\"o}rfler, ``The electronic realization of synchronous
  machines: Model matching, angle tracking, and energy shaping techniques,''
  \emph{IEEE Transactions on Power Electronics}, vol.~35, no.~4, pp.
  4398--4410, 2019.

\bibitem{tayyebi2020hybrid}
A.~Tayyebi, A.~Anta, and F.~D{\"o}rfler, ``Hybrid angle control and almost
  global stability of grid-forming power converters,'' \emph{ArXiv preprint
  ArXiv:2008.07661}, 2020.

\bibitem{molzahn2017survey}
D.~K. Molzahn, F.~D{\"o}rfler, H.~Sandberg, S.~H. Low, S.~Chakrabarti,
  R.~Baldick, and J.~Lavaei, ``A survey of distributed optimization and control
  algorithms for electric power systems,'' \emph{IEEE Transactions on Smart
  Grid}, vol.~8, no.~6, pp. 2941--2962, 2017.

\bibitem{7852234}
A.~{Hauswirth}, S.~{Bolognani}, G.~{Hug}, and F.~{Dörfler}, ``Projected
  gradient descent on riemannian manifolds with applications to online power
  system optimization,'' in \emph{2016 54th Annual Allerton Conference on
  Communication, Control, and Computing (Allerton)}, 2016, pp. 225--232.

\bibitem{colombino2019online}
M.~Colombino, E.~Dall’Anese, and A.~Bernstein, ``Online optimization as a
  feedback controller: Stability and tracking,'' \emph{IEEE Transactions on
  Control of Network Systems}, vol.~7, no.~1, pp. 422--432, 2019.

\bibitem{guo2018stochastic}
Y.~Guo, K.~Baker, E.~Dall'Anese, Z.~Hu, and T.~Summers, ``Stochastic optimal
  power flow based on data-driven distributionally robust optimization,'' in
  \emph{2018 Annual American Control Conference (ACC)}.\hskip 1em plus 0.5em
  minus 0.4em\relax IEEE, 2018, pp. 3840--3846.

\bibitem{kalman1964linear}
R.~E. Kalman, ``{When Is a Linear Control System Optimal?}'' \emph{Journal of
  Basic Engineering}, vol.~86, no.~1, pp. 51--60, 03 1964.

\bibitem{moylan1973nonlinear}
P.~Moylan and B.~Anderson, ``Nonlinear regulator theory and an inverse optimal
  control problem,'' \emph{IEEE Transactions on Automatic Control}, vol.~18,
  no.~5, pp. 460--465, 1973.

\bibitem{casti1980general}
J.~Casti, ``On the general inverse problem of optimal control theory,''
  \emph{Journal of Optimization Theory and Applications}, vol.~32, no.~4, pp.
  491--497, 1980.

\bibitem{freeman1996inverse}
R.~A. Freeman and P.~V. Kokotovic, ``Inverse optimality in robust
  stabilization,'' \emph{SIAM {J}ournal on control and optimization}, vol.~34,
  no.~4, pp. 1365--1391, 1996.

\bibitem{9429728}
T.~Jouini and A.~Rantzer, ``On cost design in applications of optimal
  control,'' \emph{IEEE Control Systems Letters}, vol.~6, pp. 452--457, 2022.

\bibitem{usman2019applications}
M.~U. Usman and M.~O. Faruque, ``Applications of synchrophasor technologies in
  power systems,'' \emph{Journal of Modern Power Systems and Clean Energy},
  vol.~7, no.~2, pp. 211--226, 2019.

\bibitem{6987381}
Y.~Zhang and L.~Xie, ``Online dynamic security assessment of microgrid
  interconnections in smart distribution systems,'' \emph{IEEE Transactions on
  Power Systems}, vol.~30, no.~6, pp. 3246--3254, 2015.

\bibitem{7419922}
Y.~{Zhang} and L.~{Xie}, ``A transient stability assessment framework in power
  electronic-interfaced distribution systems,'' \emph{IEEE Transactions on
  Power Systems}, vol.~31, no.~6, pp. 5106--5114, 2016.

\bibitem{SIMPSONPORCO20132603}
J.~W. Simpson-Porco, F.~Dörfler, and F.~Bullo, ``Synchronization and power
  sharing for droop-controlled inverters in islanded microgrids,''
  \emph{Automatica}, vol.~49, no.~9, pp. 2603--2611, 2013.

\bibitem{kundur1994power}
P.~Kundur, N.~J. Balu, and M.~G. Lauby, \emph{Power system stability and
  control}.\hskip 1em plus 0.5em minus 0.4em\relax McGraw-hill New York, 1994,
  vol.~7.

\bibitem{dorfler2016breaking}
F.~D{\"o}rfler, J.~W. Simpson-Porco, and F.~Bullo, ``Breaking the hierarchy:
  Distributed control and economic optimality in microgrids,'' \emph{IEEE
  Transactions on Control of Network Systems}, vol.~3, no.~3, pp. 241--253,
  2016.

\bibitem{jouini2020steady}
T.~Jouini and Z.~Sun, ``Frequency synchronization of a high-order
  multi-converter system,'' \emph{IEEE Transactions on Control of Network
  Systems}, 2021.

\bibitem{haddad2011nonlinear}
W.~M. Haddad and V.~Chellaboina, \emph{Nonlinear dynamical systems and
  control}.\hskip 1em plus 0.5em minus 0.4em\relax Princeton university press,
  2011.

\bibitem{sepulchre2012constructive}
R.~Sepulchre, M.~Jankovic, and P.~V. Kokotovic, \emph{Constructive nonlinear
  control}.\hskip 1em plus 0.5em minus 0.4em\relax Springer Science \& Business
  Media, Berlin/Heidelberg, Germany, 2012.

\bibitem{monshizadeh2017stability}
P.~Monshizadeh, C.~De~Persis, T.~Stegink, N.~Monshizadeh, and A.~van~der
  Schaft, ``Stability and frequency regulation of inverters with capacitive
  inertia,'' in \emph{2017 IEEE 56th Annual Conference on Decision and Control
  (CDC)}.\hskip 1em plus 0.5em minus 0.4em\relax IEEE, 2017, pp. 5696--5701.

\bibitem{K02}
H.~K. Khalil, \emph{Nonlinear systems}, 3rd~ed.\hskip 1em plus 0.5em minus
  0.4em\relax Prentice hall New Jersey, 2002.

\bibitem{denis2017grid}
G.~Denis, ``From grid-following to grid-forming: The new strategy to build
  100\% power-electronics interfaced transmission system with enhanced
  transient behavior,'' Ph.D. dissertation, Centrale Lille, Lille, France,
  2017.

\bibitem{tegling2017lcss}
E.~{Tegling} and H.~{Sandberg}, ``On the coherence of large-scale networks with
  distributed {PI} and {PD} control,'' \emph{IEEE Control Systems Letters},
  vol.~1, no.~1, pp. 170--175, 2017.

\bibitem{andreasson2017coherence}
M.~Andreasson, E.~Tegling, H.~Sandberg, and K.~H. Johansson, ``Coherence in
  synchronizing power networks with distributed integral control,'' in
  \emph{2017 IEEE 56th Annual Conference on Decision and Control (CDC)}.\hskip
  1em plus 0.5em minus 0.4em\relax IEEE, 2017, pp. 6327--6333.

\bibitem{tegling2015price}
E.~Tegling, B.~Bamieh, and D.~F. Gayme, ``The price of synchrony: Evaluating
  the resistive losses in synchronizing power networks,'' \emph{IEEE
  Transactions on Control of Network Systems}, vol.~2, no.~3, pp. 254--266,
  2015.

\bibitem{6580749}
B.~{Bamieh} and D.~F. {Gayme}, ``The price of synchrony: Resistive losses due
  to phase synchronization in power networks,'' in \emph{2013 American Control
  Conference}, 2013, pp. 5815--5820.

\bibitem{Barooah2007}
P.~Barooah and J.~P. Hespanha, ``Estimation on graphs from relative
  measurements,'' \emph{IEEE Control Systems Magazine}, vol.~27, no.~4, pp.
  57--74, Aug 2007.

\bibitem{Bamieh2012}
B.~Bamieh, M.~R. Jovanovi\'c, P.~Mitra, and S.~Patterson, ``Coherence in
  large-scale networks: {D}imension-dependent limitations of local feedback,''
  \emph{IEEE Transactions on Automatic Control}, vol.~57, no.~9, pp.
  2235--2249, Sept 2012.

\bibitem{jouini2021inverse}
T.~Jouini, A.~Rantzer, and E.~Tegling, ``Inverse optimal control for angle
  stabilization in converter-based generation,'' \emph{Extended version, ArXiv:
  2101.11141}, 2021.

\end{thebibliography}

\end{document}